\numberwithin{equation}{section}
\numberwithin{figure}{section}
\theoremstyle{plain}
\newtheorem{thm}{\protect\theoremname}
  \theoremstyle{plain}
  \newtheorem{prop}[thm]{\protect\propositionname}
  \providecommand{\propositionname}{Proposition}
\providecommand{\theoremname}{Theorem}
\begin{document}

\title[On asymptotic phase of dynamical system]{On asymptotic phase of dynamical system\\ hyperbolic along attracting invariant manifold}

\author{Alina Luchko and Igor Parasyuk$^{\dag}$}\thanks{$^{\dag}$ National Taras Shevchenko Univesity of Kyiv}

\address{Faculty of Mechanics and Mathematics, Taras Shevchenko National University
of Kyiv, 64/13, Volodymyrska Street, City of Kyiv, Ukraine, 01601 }

\subjclass[2010]{37D10; 37D05; 37D20; 34C45}

\begin{abstract}
We consider a dynamical system which has the hyperbolic structure along an attracting invariant
manifold $M$. The problem is whether every motion starting in a neighborhood
of $M$ possesses an asymptotic phase, i.e. eventually approaches
a particular motion on $M$. Earlier, positive solutions to the problem
were obtained under the condition that the decay rate of solutions
toward the manifold exceeds the decay rate of the solutions within
the manifold. We show that in our case the above condition is not
necessary. To prove that a neighborhood of $M$ is filled with motions
for each of which there exists an asymptotic phase we apply the Brouwer
fixed point theorem. An invariant foliation structure which appears
in the neighborhood of $M$ is discussed.
\end{abstract}

\maketitle

\section{Introduction}

It is well known that under quite general conditions, motions of dissipative
dynamical system evolve towards attracting invariant sets. One may
reasonably expect that the behavior of system on attracting set adequately
displays main asymptotic properties of system motions in the whole
phase space. It is important to note that in many cases the dimension
of attracting set such, e.g., as fixed point, limit cycle, invariant
torus, strange or chaotic attractor, is essentially lower than the
dimension of the total phase space. This circumstance can help us
to simplify the qualitative analysis of the system under consideration.

Nevertheless we should keep in mind that there are cases where no
motion starting outside the attracting invariant set exhibits the
same long time behavior as a motion on the set. As an example consider
the planar system
\begin{gather*}
\begin{cases}
\dot{x}=x(1-x^{2}-y^{2})^{3}-y(1+x^{2}+y^{2}),\\
\dot{y}=x(1+x^{2}+y^{2})+y(1-x^{2}-y^{2})^{3}
\end{cases}
\end{gather*}
 which in polar coordinates $\left(\varphi\bigl|\bmod2\pi,r\right)$
takes the form
\begin{gather*}
\dot{\varphi}=1+r^{2},\quad\dot{r}=r\left(1-r^{2}\right){}^{3}.
\end{gather*}
The limit cycle of the system, $r=1$, attracts all the orbits except
the equilibrium $(0,0)$. Let $\varphi(t;\varphi_{0},r_{0})$ be the
$\varphi$-coordinate of the motion starting at point $(r_{0}\cos\varphi_{0},r_{0}\sin\varphi_{0})$.
Obviously, $\varphi(t;\varphi_{\ast},1)=2t+\varphi_{\ast}$, but if
$r_{0}\not\in\left\{ 0,1\right\} $, then it is not hard to show that
\begin{gather*}
\lim_{t\to\infty}\left|\varphi(t;\varphi_{0},r_{0})-\varphi(t;\varphi_{\ast},1)\right|=\infty\quad\forall\left\{ \varphi_{0},\varphi_{\ast}\right\} \subset[0,2\pi).
\end{gather*}

Let $\left\{ g^{t}(\cdot):\mathfrak{M}\mapsto\mathfrak{M}\right\} _{t\in\mathbb{R}}$
be a flow on a metric space $\left(\mathfrak{M},\rho(\cdot,\cdot)\right)$
with metric $\rho(\cdot,\cdot):\mathfrak{M}\mapsto\mathbb{R}_{+}$,
and let there exists an invariant attracting set $\mathcal{A}\subset\mathfrak{M}$
with a basin $\mathfrak{B}$:
\begin{gather*}
\lim_{t\to\infty}\rho\left(g^{t}(p),\mathcal{A}\right)=0\quad\forall p\in\mathfrak{B}.
\end{gather*}
It is said that a motion $t\mapsto g^{t}(p)$, $p\in\mathfrak{B}$,
has an \emph{asymptotic phase} if there exists $p_{\ast}\in\mathfrak{M}$
such that
\begin{gather*}
\rho\left(g^{t}(p),g^{t}(p_{\ast})\right)\to0,\quad t\to\infty.
\end{gather*}

The following problem arises: what are the conditions guaranteeing
the existence of asymptotic phase? The answer to this problem is rather
important, since the existence of asymptotic phase for any $p\in\mathfrak{B}$
ensures that any motion starting in $\mathfrak{B}$ eventually behaves
like a corresponding motion on $\mathcal{A}$, and thus the flow restricted
to attractor $\mathcal{A}$ faithfully describes the long-time behavior
of the motions starting in $\mathfrak{B}$.

The above problem was studied in a series of papers. The most complete
examination concerns the case where the attracting set is either a
cycle or a manifold fibered by cycles~\cite{Codd_Lev_55,Hartman_64,Chicone_Liu_04,Dumortier_06,Battelli-2011}.
N.~Fenichel~\cite{Fenichel74} established the existence and uniqueness
of asymptotic phase for discrete dynamical system possessing exponentially
stable overflowing invariant manifold with, so-called, expanding structure.
A.~M.~Samojlenko~\cite{Sam_DE_76} and W.~A.~Coppel \cite{Coppel78}
studied the problem for the case of exponentially stable invariant
torus. B B.~Aulbach~\cite{Aulbach_1982} proved the existence of
asymptotic phase for motions approaching a hyperbolic invariant manifold
under assumption that the latter carries a parallel flow. In~\cite{Bog_Ilin_08},
A.~A.~Bogolyubov and Yu.~A.~Il'in estableshed sufficient conditions
ensuring the existence of asymptotic phase for stable invariant torus
(however the authors did not use the notion of asymptotic phase explicitly).
The conditions in~\cite{Bog_Ilin_08} admit non-exponential stability
of invariant torus but exclude the case of exponential divergence
for trajectories within the torus. (See~\cite{Aulbach_1982} for
more comments on the issue).

As was pointed out in~\cite{Aulbach_1982} the conditions ensuring
the existence of an asymptotic phase involve the requirement that
the decay rate of solutions toward the manifold is greater than the
decay rate of the solutions within the manifold. The aim of the present
paper is to show that such a condition is not a necessary one. Like
in~\cite{Fenichel74}, we consider the case of asymptotically stable
hyperbolic invariant manifold, but in contrary to the mentioned article
we deal with a flow rather then a cascade, and besides, in our case,
the decay rate of solutions toward the manifold need not be greater
then the decay rate of the solutions within the manifold. Actually
we exploit the maximal of negative Lyapunov exponents characterizing
the both rates. Our main observation is that one can weaken the expanding
structure condition by abandoning the requirement of asymptotic phase
uniqueness. To prove that a neighborhood of stable invariant manifold
is filled with motions for each of which there exists an asymptotic
phase we apply the Brouwer fixed point theorem, rather then the theorem
on invariance of domain for homeomorphisms as in~\cite{Fenichel74}.

\section{A theorem on the existence asymptotic phase}

Let a $\mathrm{C}^{2}$-vector field $v$ generates the flow$\left\{ \chi^{t}(\cdot):\mathbb{R}^{n}\mapsto\mathbb{R}^{n}\right\} _{t\in\mathbb{R}}$
in space $\mathbb{R}^{n}$ endowed with scalar product $\left\langle \cdot,\cdot\right\rangle $
and norm $\left\Vert \cdot\right\Vert :=\sqrt{\left\langle \cdot,\cdot\right\rangle }$.
Assume that there is a domain $\mathcal{D}\subset\mathbb{R}^{n}$
containing a compact attracting invariant $\mathrm{C^{1}}$-sub-manifold
$\mathcal{M}\hookrightarrow\mathcal{D}$ of dimension $m<n$:
\begin{gather*}
\chi^{t}(\mathcal{M})=\mathcal{M}\quad\forall t\in\mathbb{R};\quad\lim_{t\to\infty}\inf_{\xi\in M}\left\Vert \chi^{t}(x)-\xi\right\Vert =0\quad\forall x\in\mathcal{D}.
\end{gather*}

Consider the autonomous system

\begin{gather}
\dot{x}=v(x).\label{eq:aut-sys-v}
\end{gather}
 From
\begin{gather*}
\frac{\mathrm{d}}{\mathrm{d}t}\frac{\partial\chi^{t}(x)}{\partial x}=v^{\prime}\left(\chi^{t}(x)\right)\frac{\partial\chi^{t}(x)}{\partial x}
\end{gather*}
it follows that
\begin{gather*}
X^{t}(x):=\frac{\partial\chi^{t}(x)}{\partial x}
\end{gather*}
is the normed fundamental matrix of variational system
\begin{gather*}
\dot{y}=v^{\prime}\left(\chi^{t}(x)\right)y,
\end{gather*}
 and the equality $\chi^{t+s}(x)=\chi^{t}\circ\chi^{s}(x)$ implies
the co-cycle property of $X^{t}(x)$:
\begin{gather}
X^{t+s}(x)=X^{t}\left(\chi^{s}(x)\right)X^{s}(x),\quad X^{-s}\left(\chi^{s}(x)\right)=\left[X^{s}(x)\right]^{-1}.\label{eq:X^t-cocycle}
\end{gather}

We say that the flow $\left\{ \chi^{t}(\cdot)\right\} $ is \emph{hyperbolic
along the manifold} $\mathcal{M}$ (equivalently, the manifold $\mathcal{M}$
is said to be\emph{ hyperbolic w.r.t. the flow} $\left\{ \chi^{t}(\cdot)\right\} $)
if:

(1) at every point $x\in\mathcal{M}$, the tangent space $T_{x}\mathbb{R}^{n}\simeq\mathbb{R}^{n}$
is decomposed into the direct sum of three sub-spaces :
\begin{gather}
T_{x}\mathbb{R}^{n}=\mathbb{L}_{x}^{-}\oplus\mathbb{L}_{x}^{+}\oplus\mathbb{L}_{x}^{0},\quad\dim\mathbb{L}_{x}^{\pm,0}=n_{\pm,0},\label{eq:decomp}
\end{gather}
where $\mathbb{L}_{x}^{0}:=\left\{ \lambda v(x)\right\} _{\lambda\in\mathbb{R}}$
is a 1-D subspace spanned by the vector $v(x):=\frac{\mathrm{d}}{\mathrm{d}t}\bigl|_{t=0}\chi^{t}(x)$;

(2) the correspondences $\mathcal{M}\ni x\mapsto\mathbb{L}_{x}^{\pm}$
define continuous and $X^{t}$-invariant fields of planes $\left\{ \mathbb{L}_{x}^{\pm}\right\} _{x\in M}$:
\begin{gather*}
X^{t}(x)\mathbb{L}_{x}^{\pm}=\mathbb{L}_{\chi^{t}(x)}^{\pm}\quad\forall(t,x)\in\mathbb{R}\times\mathcal{M};
\end{gather*}

(3) there exist constants $c\ge1,\alpha>0$ such that
\begin{gather}
\left\Vert X^{t}(x)\eta\right\Vert \le c\mathrm{e}^{-\alpha t}\left\Vert \eta\right\Vert \quad\forall t\ge0,\;\forall x\in\mathcal{M},\;\forall\eta\in\mathbb{L}_{x}^{-},\label{eq:t>0}\\
\left\Vert X^{t}(x)\eta\right\Vert \le c\mathrm{e}^{\alpha t}\left\Vert \eta\right\Vert \quad\forall t\le0,\;\forall x\in\mathcal{M},\;\forall\eta\in\mathbb{L}_{x}^{+},\label{eq:t<0}
\end{gather}

Observe that
\begin{alignat*}{1}
\frac{\mathrm{d}}{\mathrm{d}t}\chi^{t}(x)=v\left(\chi^{t}(x)\right) & \quad\Rightarrow\quad\frac{\mathrm{d}}{\mathrm{d}t}v\left(\chi^{t}(x)\right)=v^{\prime}\left(\chi^{t}(x)\right)v\left(\chi^{t}(x)\right)\\
 & \quad\Rightarrow\quad v\left(\chi^{t}(x)\right)=X^{t}(x)v(x).
\end{alignat*}
 Thus, in addition to~(\ref{eq:t>0}), (\ref{eq:t<0}), we may consider
that
\begin{gather}
\left\Vert X^{t}(x)\eta\right\Vert \le c\left\Vert \eta\right\Vert \quad\forall t\in\mathbb{R},\;\forall x\in\mathcal{M},\;\forall\eta\in\mathbb{L}_{x}^{0}.\label{eq:X^txi_0}
\end{gather}

In what follows, we will consider the case where the sub-manifold
$\mathcal{M}$ is stable, and thus, $\mathbb{L}_{x}^{+}\subset T_{x}\mathcal{M}$
for any $x\in\mathcal{M}$. The space $\mathbb{L}_{x}^{-}$ admits
a decomposition
\begin{gather*}
\mathbb{L}_{x}^{-}=\mathbb{J}_{x}^{-}\oplus\mathbb{K}_{x}^{-}
\end{gather*}
where $\mathbb{K}_{x}^{-}:=T_{x}\mathcal{M}\cap\mathbb{L}_{x}^{-}$
and $\mathbb{J}_{x}^{-}$ is a complement of $\mathbb{K}_{x}^{-}$
in $\mathbb{L}_{x}^{-}$. Hence,
\begin{gather*}
T_{x}\mathbb{R}^{n}=T_{x}\mathcal{M}\oplus\mathbb{J}_{x}^{-},\quad T_{x}\mathcal{M}=\mathbb{K}_{x}^{-}\oplus\mathbb{L}_{x}^{+}\oplus\mathbb{L}_{x}^{0}.
\end{gather*}

Let $T_{\xi}^{\perp}\mathcal{M}$ stands for the orthogonal complement
of tangent space $T_{\xi}\mathcal{M}$ considered as a subspace of
$\mathbb{R}^{n}$. Obviously, $\dim T_{x}^{\perp}\mathcal{M}=\dim\mathbb{J}_{x}^{-}$.
Since $\mathcal{M}$ is compact, then there is sufficiently small
$\delta>0$ such that for any $x\in\mathcal{M}$ the angle between
$\mathbb{J}_{x}^{-}$ and $T_{x}\mathcal{M}$ exceeds $\delta$. This
implies that there is a number $r\in(0,1)$ such that the set
\begin{gather*}
\mathcal{U}_{r}(\mathcal{M}):=\left\{ (\xi,\zeta):\xi\in\mathcal{M},\;\zeta\in\mathbb{J}_{\xi}^{-},\;\left\Vert \zeta\right\Vert <r\right\}
\end{gather*}
 forms a tubular neighborhood of $\mathcal{M}$. Obviously,
\begin{gather*}
\mathcal{U}_{r}(\mathcal{M})\subset\mathcal{N}_{r}(\mathcal{M}):=\left\{ (\xi,\eta):\xi\in\mathcal{M},\;\eta\in\mathbb{L}_{\xi}^{-},\;\left\Vert \eta\right\Vert <r\right\} .
\end{gather*}
The mapping $\mathcal{N}_{r}(\mathcal{M})\ni(\xi,\eta)\mapsto\xi+\eta\in\mathbb{R}^{n}$
define the natural embedding $\mathcal{N}_{r}(\mathcal{M})\hookrightarrow\mathbb{R}^{n}$,
so in what follows we will not distinguish between $(\xi,\eta)\in\mathcal{N}_{r}(\mathcal{M})$
and $\xi+\eta\in\mathbb{R}^{n}$, until it leads to confusion.

Since the field of plains $\left\{ T_{x}\mathcal{M}\right\} _{x\in\mathcal{M}}$
and $\left\{ \mathbb{L}_{x}^{-}\right\} _{x\in\mathcal{M}}$ are $X^{t}$-invariant,
then $\left\{ \mathbb{K}_{x}^{-}\right\} _{x\in\mathcal{M}}$ is $X^{t}$-invariant
as well. In such a case, the flow on invariant manifold, $\left\{ \chi^{t}(\cdot):\mathcal{M}\mapsto\mathcal{M}\right\} _{t\in\mathbb{R}}$,
has the structure of Anosov dynamical system (ADS) (see, e.g., \cite{Arn-Geom_Meth_88}
and Example at the end of this paper). In particular, each of the
fields of planes $\left\{ \mathbb{K}_{x}^{-}\right\} _{x\in\mathcal{M}}$
and $\left\{ \mathbb{L}_{x}^{+}\right\} _{x\in\mathcal{M}}$ are integrable
and form, respectively, the contracting and expanding foliations invariant
w.r.t. the flow $\left\{ \chi^{t}(\cdot)\right\} $. As is well known,
ADSs play an important role in the theory of chaos. Yet another circumstance
that motivate to study the case under consideration is the structural
stability property of ADSs.

Our main result is as follows
\begin{thm}
If the flow $\left\{ \chi^{t}(\cdot)\right\} $ is hyperbolic along
the attracting invariant manifold $\mathcal{M}$, then there is a
neighborhood $\mathcal{U}$ of $\mathcal{M}$ such that any motion
starting in $\mathcal{U}$ has an asymptotic phase.
\end{thm}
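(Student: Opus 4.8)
The plan is to fix a point $x$ in a small tubular neighborhood of $\mathcal{M}$, follow its orbit, and construct a candidate asymptotic phase $p_\ast\in\mathcal{M}$ by a limiting fixed-point argument. Since $\mathcal{M}$ is attracting, $\chi^t(x)$ enters $\mathcal{U}_r(\mathcal{M})$ for all large $t$, so without loss of generality assume $x=(\xi_0,\zeta_0)$ with $\xi_0\in\mathcal{M}$, $\zeta_0\in\mathbb{J}_{\xi_0}^-$, $\|\zeta_0\|$ small. For each candidate base point $\eta\in\mathcal{M}$ I would like to compare $\chi^t(x)$ with $\chi^t(\eta)$; the difference, projected appropriately along the splitting $T_\xi\mathbb{R}^n=T_\xi\mathcal{M}\oplus\mathbb{J}_\xi^-$, should be governed by the variational cocycle $X^t$. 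The key estimate from hyperbolicity is that along the unstable-within-$\mathcal{M}$ directions $\mathbb{L}^+$ the cocycle contracts in \emph{backward} time at rate $\alpha$, and along $\mathbb{L}^-$ it contracts in forward time at rate $\alpha$; on the flow direction $\mathbb{L}^0$ it is merely bounded by $c$. The point of using "the maximal of the negative Lyapunov exponents" is that a single exponent $-\alpha$ controls both the normal decay and the in-$\mathcal{M}$ contraction, so the cross terms produced by the nonlinearity (the $\mathrm{C}^2$ remainder, which is quadratically small) can be absorbed.

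Concretely, I would set up, for each $T>0$, a map $\Phi_T$ on a small closed ball $B\subset\mathcal{M}$ (transported into a single chart / a single local unstable-stable product neighborhood around $\xi_0$) as follows: given a trial point $q\in B$, flow both $x$ and $q$ forward to time $T$, and then use the contracting foliation $\{\mathbb{K}_x^-\}$ and the normal fibers to slide $\chi^T(q)$ back so that its "unstable coordinate" matches that of $\chi^T(x)$; pull the resulting point back by $\chi^{-T}$ into $B$. The fixed point of $\Phi_T$ is the time-$T$ approximation to the phase. Because $\mathbb{L}^+$ contracts backward in time, $\Phi_T$ maps $B$ into itself once $\|\zeta_0\|$ and $\operatorname{diam}B$ are small enough; $\Phi_T$ is continuous (here the $\mathrm{C}^1$ dependence of $\chi^t$ and the continuity of the plane fields $\mathbb{L}^\pm$ are used), so Brouwer gives a fixed point $q_T\in B$. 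One then extracts a convergent subsequence $q_{T_k}\to p_\ast\in B\subset\mathcal{M}$ and checks that $\|\chi^t(x)-\chi^t(p_\ast)\|\to 0$ as $t\to\infty$: the unstable-in-$\mathcal{M}$ coordinate stays matched on $[0,T_k]$ and hence, by backward contraction, is uniformly small; the $\mathbb{L}^-$ coordinate (normal part plus stable-in-$\mathcal{M}$ part) decays like $c\,\mathrm{e}^{-\alpha t}$ up to the quadratic correction; and the flow-direction discrepancy is handled by a time reparametrization of $p_\ast$ along its own orbit, i.e. by replacing $p_\ast$ with $\chi^{\tau}(p_\ast)$ for a suitable $\tau$, which is legitimate since $\mathbb{L}^0$ is spanned by $v$.

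The routine part is the Gronwall-type bookkeeping: writing $\chi^t(x)-\chi^t(q)=X^t(q)\,(x-q)+\text{(quadratic remainder)}$, splitting $x-q$ along the three subspaces, and showing the remainder integral converges because every surviving linear term carries a factor $\mathrm{e}^{-\alpha t}$ (after the backward-contraction trick for $\mathbb{L}^+$) while the remainder is $O(\|\chi^s(x)-\chi^s(q)\|^2)$. The genuine obstacle, and the place I expect the argument to require real care, is the self-consistency of the construction of $\Phi_T$: the map is only defined as long as both orbits remain in the tubular neighborhood and in a common chart where the foliations $\{\mathbb{K}^-_x\}$, $\{\mathbb{L}^+_x\}$ give honest product coordinates, so one must show a priori that the time-$T$ fixed point $q_T$ never lets $\chi^t(q_T)$ escape this neighborhood on $[0,T]$ — otherwise $\Phi_T$ is not a self-map of $B$. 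This is exactly where compactness of $\mathcal{M}$, uniformity of the hyperbolicity constants $c,\alpha$, and the fact that we do \emph{not} demand uniqueness of $p_\ast$ (so $B$ need not shrink to a point) all come into play; relaxing uniqueness is what lets the Brouwer argument replace Fenichel's invariance-of-domain argument. A secondary technical nuisance is globalizing from one chart to a neighborhood of all of $\mathcal{M}$, handled by a finite cover and uniform constants.
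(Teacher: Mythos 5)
Your architecture (finite-time matching of the expanding coordinate, Brouwer on a ball in $\mathcal{M}$, then a limit $T_k\to\infty$) is genuinely different from the paper's. The paper first runs a Lyapunov--Perron argument along each orbit $\chi^t(\xi)$, $\xi\in\mathcal{M}$: using the Green function built from $P^-_\xi$ (integrated forward) and $P^+_\xi+P^0_\xi$ (integrated backward), it produces for every small $\eta\in\mathbb{L}^-_\xi$ a unique solution $y_*(t,\xi,\eta)=O(\mathrm{e}^{-\alpha t})$ with $P^-_\xi y_*(0)=\eta$, hence a correction $h(\xi,\eta)=y_*(0,\xi,\eta)-\eta\in\mathbb{L}^+_\xi\oplus\mathbb{L}^0_\xi$ of size $O(\|\eta\|^2)$ such that the point $\xi+\eta+h(\xi,\eta)$ has asymptotic phase $\xi$. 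Brouwer is then used exactly once, in fixed local coordinates $(q,p)$, to show that $(\xi,\eta)\mapsto\xi+\eta+h(\xi,\eta)$ covers a neighborhood of $\mathcal{M}$. This avoids moving charts, avoids any limit over $T$, and, crucially, treats the flow direction $\mathbb{L}^0$ on the same footing as $\mathbb{L}^+$ inside the Green function, so no reparametrization is ever needed. It also uses only the linear projections $P^{\pm,0}$, never the integrability of the distributions $\mathbb{K}^-$, $\mathbb{L}^+$, which your product coordinates presuppose.

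Two genuine gaps remain in your version. First, the flow direction: $\Phi_T$ matches only the $\mathbb{L}^+$-coordinate, and you defer $\mathbb{L}^0$ to a terminal shift $p_*\mapsto\chi^{\tau}(p_*)$. But $\chi^{t+\tau}(p_*)-\chi^{t}(p_*)=\tau\,v\bigl(\chi^{t}(p_*)\bigr)+O(\tau^{2})$ with the $O(\tau^{2})$ term uniform in $t$ and not tending to zero, so a single shift reduces the discrepancy only to $O(\tau^{2})$ rather than killing it; you would need to iterate the shifts and prove their convergence, or include the $\mathbb{L}^0$-coordinate in the time-$T$ matching from the start (which is what the $P^0$ term in the paper's Green function does). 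Second, the claim that the unstable coordinate ``stays matched on $[0,T_k]$'' is not right: it is matched only at $t=T_k$, and at earlier times it solves the variational equation with quadratic forcing by the other, merely decaying, components; controlling it, together with the self-map property of $\Phi_T$, requires the coupled integral-equation estimates you relegate to ``Gronwall-type bookkeeping'', uniformly in $T$. That bookkeeping is the actual analytic content of the theorem (the paper's Proposition \ref{prop:exist-y}), so as written the hard part of the proof is assumed rather than carried out.
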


The proof of this theorem is based on construction of local contracting
foliation generated by the fields of planes $\left\{ \mathbb{L}_{x}^{-}\right\} _{x\in\mathcal{M}}$.
The existence of such a foliation can be obtained by appropriate interpolation
of corresponding results \cite{Fenichel74} concerning diffeomorphisms.
However, the paper~\cite{Fenichel74} does not contain any details
on the issue.

\section{Proof of the main theorem}

Let $P_{x}^{\pm,0}:\mathbb{R}^{n}\mapsto\mathbb{L}_{x}^{\pm,0}$ be
projections associated with decomposition~(\ref{eq:decomp}). Thus,
$P_{x}^{+}+P_{x}^{-}+P_{x}^{0}=\mathrm{Id}$ where $\mathrm{Id}:\mathbb{R}^{n}\mapsto\mathbb{R}^{n}$
is the identity map in $\mathbb{R}^{n}$. Since for each $x\in\mathcal{M}$
the diagram

\begin{gather*}
\begin{CD}{\mathbb{R}^{n}}@>{X^{t}(x)}>>{\mathbb{R}^{n}}\\
@V{P_{x}^{\pm,0}}VV@VV{P_{\chi^{t}(x)}^{\pm,0}}V\\
{\mathbb{L}_{x}^{\pm,0}}@>{X^{t}(x)}>>{\mathbb{L}_{\chi^{t}(x)}^{\pm,0}}
\end{CD}
\end{gather*}
is commutative, then
\begin{gather}
P_{x}^{\pm,0}=\left[X^{t}(x)\right]^{-1}P_{\chi^{t}(x)}^{\pm,0}X^{t}(x).\label{eq:P_0,pm}
\end{gather}
On account of ~(\ref{eq:P_0,pm}) and~(\ref{eq:X^t-cocycle}) we
easily obtain
\begin{gather*}
X^{t-s}\left(\chi^{s}(x)\right)=X^{t}(x)X^{-s}\left(\chi^{s}(x)\right)=\left[X^{t}(x)\right]\left[X^{s}(x)\right]^{-1}
\end{gather*}
and
\begin{alignat*}{2}
X^{t}(x)P_{x}^{\pm,0}\left[X^{s}(x)\right]^{-1} & =\left[X^{t}(x)\right]\left[X^{s}(x)\right]^{-1}P_{\chi^{s}(x)}^{\pm,0} & =X^{t-s}\left(\chi^{s}(x)\right)P_{\chi^{s}(x)}^{\pm,0}.
\end{alignat*}
If we define $K:=c\max\left\{ \max_{x\in\mathcal{M}}\left\Vert P_{x}^{\pm,0}\right\Vert \right\} $,
then inequalities (\ref{eq:P_0,pm}), (\ref{eq:X^txi_0}) yield
\begin{alignat*}{1}
\left\Vert X^{t}(x)P_{x}^{-}\left[X^{s}(x)\right]^{-1}\right\Vert  & \le K\mathrm{e}^{-\alpha(t-s)},\quad t\ge s,\\
\left\Vert X^{t}(x)P_{x}^{+}\left[X^{s}(x)\right]^{-1}\right\Vert  & \le K\mathrm{e}^{\alpha(t-s)},\quad t\le s.\\
\left\Vert X^{t}(x)P_{x}^{0}\left[X^{s}(x)\right]^{-1}\right\Vert  & \le K,\quad t,s\in\mathbb{R}.
\end{alignat*}

Introduce the new variable $y$ by

\begin{gather*}
x=\chi^{t}(\xi)+y
\end{gather*}
where $\xi\in\mathcal{M}$ is considered as a parameter. If we define
\begin{alignat*}{1}
w(t,y,\xi): & =v\left(\chi^{t}(\xi)+y\right)-v\left(\chi^{t}(\xi)\right)-v^{\prime}\left(\chi^{t}(\xi)\right)y,
\end{alignat*}
then system~(\ref{eq:aut-sys-v}) in new variables takes the form
\begin{gather}
\begin{split}\dot{y} & =v^{\prime}\left(\chi^{t}(\xi)\right)y+w(t,y,\xi).\end{split}
\label{eq:sys-for-y}
\end{gather}
 Since $v$ is $\mathrm{C}^{2}$-vector field, then  there is a constant
$C>0$ such that

\begin{gather}
\left\Vert w(t,y,\xi)\right\Vert \le\frac{C}{2}\left\Vert y\right\Vert ^{2},\quad\left\Vert w_{y}^{\prime}(t,y,\xi)\right\Vert \le C\left\Vert y\right\Vert ,\quad\left\Vert w_{yy}^{\prime\prime}(t,y,\xi)\right\Vert \le C\label{eq:est-w}\\
\forall(t,\xi)\in\mathbb{R}\times\mathcal{M},\;\left\Vert y\right\Vert \le1.\nonumber
\end{gather}

We consider (\ref{eq:sys-for-y}) as a family of systems depending
on parameter $\xi\in\mathcal{M}$. It is not hard to show that, for
a fixed $\xi\in\mathcal{M}$, a mapping $y(\cdot):\mathbb{R}_{+}\mapsto\mathbb{R}^{n}$
is a solution of~(\ref{eq:sys-for-y}) tending to zero as $t\to\infty$
if and only if for some $\eta\in\mathbb{L}_{\xi}^{-}$ the mapping
$y(\cdot)$ satisfies the integral equation
\begin{alignat}{1}
y(t) & =\mathcal{G}[y](t,\xi,\eta):=X^{t}(\xi)\eta+\intop_{0}^{\infty}G(t,s,\xi)w(s,y(s),\xi)\mathrm{d}s\label{eq:int-eqn}
\end{alignat}
with kernel (Green function)
\begin{gather*}
G(t,s,\xi):=\begin{cases}
-X^{t}(\xi)\left[P_{\xi}^{+}+P_{\xi}^{0}\right]\left[X^{s}(\xi)\right]^{-1} & t\le s\\
X^{t}(\xi)P_{\xi}^{-}\left[X^{s}(\xi)\right]^{-1} & t>s.
\end{cases}
\end{gather*}
Besides, for given $\xi\in\mathcal{M}$ and $\eta\in\mathbb{L}_{\xi}^{-}$,
a solution to~(\ref{eq:int-eqn}) satisfies $P_{\xi}^{-}y(0)=\eta$.
\begin{prop}
\label{prop:exist-y}There exist positive numbers $\left\{ r,R\right\} \subset(0,1)$
such that there hold the following assertions:

$(\mathrm{i})$ for each $(\xi,\eta)\in\mathcal{N}_{r}(\mathcal{M})$
there exists a unique solution to~(\ref{eq:sys-for-y}), $y_{\ast}(\cdot,\xi,\eta):\mathbb{R}_{+}\mapsto\mathbb{R}^{n}$,
such that
\begin{equation}
\left\Vert y_{\ast}(t,\xi,\eta)\right\Vert \le R\mathrm{e}^{-\alpha t}\quad\forall t\ge0\quad\textrm{and}\quad P_{\xi}^{-}y(0)=\eta;\label{eq:Y_r,R}
\end{equation}

$(\mathrm{ii})$ the mapping $y_{\ast}(\cdot,\cdot,\cdot):\mathbb{R}_{+}\times\mathcal{N}_{r}(\mathcal{M})\mapsto\mathbb{R}^{n}$
is continuous, and for any $\left(t,\xi\right)\in\mathbb{R}_{+}\times\mathcal{M}$
the mapping $y_{\ast}(t,\xi,\cdot):\mathbb{L}_{\xi}^{-}\mapsto\mathbb{R}^{n}$
is twicely continuous differentiable;

$(\mathrm{iii})$ there exists a constant $C_{0}>0$ such that
\begin{alignat}{1}
\left\Vert y_{\ast}(t,\xi,\eta)-X^{t}(\xi)\eta\right\Vert  & \le C_{0}\mathrm{e}^{-\alpha t}\left\Vert \eta\right\Vert ^{2}\quad\forall(t,\xi,\eta)\in\mathbb{R}_{+}\times\mathcal{N}_{r}(\mathcal{M}).\label{eq:y-X^t-eta}
\end{alignat}
\end{prop}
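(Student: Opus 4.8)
The plan is to solve the integral equation~(\ref{eq:int-eqn}) by the contraction mapping principle in a space of exponentially decaying functions and then to read off $(\mathrm{i})$--$(\mathrm{iii})$ from properties of the fixed point. For fixed $\xi\in\mathcal{M}$ and $\eta\in\mathbb{L}_\xi^-$ I would work in the Banach space
\[
\mathcal{C}_\alpha:=\Bigl\{y\in\mathrm{C}(\mathbb{R}_+,\mathbb{R}^n)\ :\ \|y\|_\alpha:=\sup_{t\ge0}\mathrm{e}^{\alpha t}\|y(t)\|<\infty\Bigr\}
\]
and in its closed ball $\mathcal{C}_\alpha(R):=\{\|y\|_\alpha\le R\}$ with $R<1$, so that $\|y(t)\|\le R\mathrm{e}^{-\alpha t}<1$ along any $y\in\mathcal{C}_\alpha(R)$ and the bounds~(\ref{eq:est-w}) are available. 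The dichotomy estimates displayed just before~(\ref{eq:int-eqn}) give $\|G(t,s,\xi)\|\le 2K$ for all $t,s$, with the extra decay $\|G(t,s,\xi)\|\le 2K\mathrm{e}^{-\alpha(t-s)}$ when $t\ge s$; since $\|w(s,y(s),\xi)\|\le\frac{C}{2}\|y(s)\|^{2}$ decays like $\mathrm{e}^{-2\alpha s}$ along $y\in\mathcal{C}_\alpha(R)$, these yield a constant $\kappa=\kappa(K,\alpha)$ with $\int_0^\infty\|G(t,s,\xi)\|\mathrm{e}^{-2\alpha s}\,\mathrm{d}s\le\kappa\,\mathrm{e}^{-\alpha t}$, whence
\[
\Bigl\|\,\intop_0^\infty G(t,s,\xi)w(s,y(s),\xi)\,\mathrm{d}s\,\Bigr\|\le\frac{C\kappa}{2}\,\|y\|_\alpha^{2}\,\mathrm{e}^{-\alpha t},\qquad \|X^t(\xi)\eta\|\le c\,\mathrm{e}^{-\alpha t}\|\eta\|
\]
(the last inequality because $\eta\in\mathbb{L}_\xi^-$). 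Hence $\mathcal{G}[\cdot](\cdot,\xi,\eta)$ maps $\mathcal{C}_\alpha(R)$ into itself once $c\|\eta\|+\frac{C\kappa}{2}R^2\le R$, and, estimating $w(s,y_1(s),\xi)-w(s,y_2(s),\xi)$ by the mean value theorem along the segment $[y_2,y_1]$ together with the second bound in~(\ref{eq:est-w}), one gets $\|\mathcal{G}[y_1]-\mathcal{G}[y_2]\|_\alpha\le C\kappa R\,\|y_1-y_2\|_\alpha$. Choosing $R<\min\{1,(C\kappa)^{-1}\}$ and then $r:=R/(2c)$ makes both conditions hold with $R,r$ independent of $(\xi,\eta)$, and the contraction principle yields a unique fixed point $y_\ast(\cdot,\xi,\eta)\in\mathcal{C}_\alpha(R)$.

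By the equivalence recalled just before~(\ref{eq:int-eqn}), $y_\ast(\cdot,\xi,\eta)$ solves~(\ref{eq:sys-for-y}), satisfies $P_\xi^- y(0)=\eta$, and obeys $\|y_\ast(t,\xi,\eta)\|\le R\mathrm{e}^{-\alpha t}$, which gives the existence part of $(\mathrm{i})$. For uniqueness I would note that any solution of~(\ref{eq:sys-for-y}) satisfying~(\ref{eq:Y_r,R}) tends to $0$, hence solves~(\ref{eq:int-eqn}) with parameter $P_\xi^- y(0)=\eta$ and lies in $\mathcal{C}_\alpha(R)$, so it coincides with $y_\ast(\cdot,\xi,\eta)$. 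For $(\mathrm{iii})$ I would first sharpen the a priori bound: from $\|y_\ast\|_\alpha=\|\mathcal{G}[y_\ast](\cdot,\xi,\eta)\|_\alpha\le c\|\eta\|+\frac{C\kappa}{2}R\,\|y_\ast\|_\alpha$ together with $\frac{C\kappa}{2}R\le\frac12$ one obtains $\|y_\ast(t,\xi,\eta)\|\le 2c\|\eta\|\mathrm{e}^{-\alpha t}$; substituting this into $y_\ast(t,\xi,\eta)-X^t(\xi)\eta=\intop_0^\infty G(t,s,\xi)w(s,y_\ast(s,\xi,\eta),\xi)\,\mathrm{d}s$ and using $\|w\|\le\frac{C}{2}\|y_\ast(s)\|^{2}$ yields~(\ref{eq:y-X^t-eta}) with $C_0=2Cc^2\kappa$.

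The main work is $(\mathrm{ii})$. With $\xi$ held fixed, $\eta\mapsto\mathcal{G}[y](\cdot,\xi,\eta)$ is affine, while $y\mapsto\mathcal{G}[y](\cdot,\xi,\eta)$ is $\mathrm{C}^2$ on $\mathcal{C}_\alpha(R)$: the substitution operator $y\mapsto w(\cdot,y(\cdot),\xi)$ is twice continuously differentiable as a map into the $\mathrm{e}^{-2\alpha s}$-weighted space --- the weights supply Lipschitz bounds for $w'_y$, and since $v\in\mathrm{C}^2$ and $\mathcal{M}$ is compact, so that $w''_{yy}$ is uniformly continuous along the relevant bounded set of arguments, also the continuity of the second derivative --- and convolution with $G$ is a bounded linear map into $\mathcal{C}_\alpha$. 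The smooth uniform contraction principle then gives that $\eta\mapsto y_\ast(\cdot,\xi,\eta)$ is $\mathrm{C}^2$, its first and second $\eta$-derivatives being the unique $\mathcal{C}_\alpha$-solutions of the linear integral equations obtained by formally differentiating~(\ref{eq:int-eqn}) (again contractions by the same estimates, now with kernel $G(t,s,\xi)w'_y(s,y_\ast(s),\xi)(\cdot)$ and $\|w'_y(s,y_\ast(s),\xi)\|\le 2Cc\|\eta\|\mathrm{e}^{-\alpha s}$). For the joint continuity of $(t,\xi,\eta)\mapsto y_\ast(t,\xi,\eta)$ I would combine continuity in $t$ (it is an ODE solution) with continuity of $(\xi,\eta)\mapsto y_\ast(\cdot,\xi,\eta)$ in $\|\cdot\|_\alpha$, which dominates the uniform norm on $\mathbb{R}_+$; the latter comes from the parametrized contraction principle, carried out in a local trivialization of the subbundle $\{\mathbb{L}_x^-\}_{x\in\mathcal{M}}$. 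Here lies the real obstacle: one must show that $(\xi,\eta)\mapsto\mathcal{G}[y](\cdot,\xi,\eta)$ is continuous \emph{in the weighted norm}, i.e. uniformly in $t$, which rests on continuity of the hyperbolic splitting and of the cocycle $X^t(\cdot)$ over the compact manifold $\mathcal{M}$ and requires using the uniform dichotomy estimates to render the tails (large $s$, and large $t$) uniformly small before passing to the limit under the integral sign. I would carry this out by estimating $\|y_\ast(\cdot,\xi,\eta)-y_\ast(\cdot,\xi_0,\eta_0)\|_\alpha$ directly from~(\ref{eq:int-eqn}), splitting the difference so that the part depending on $y_\ast$ reproduces the contraction factor $C\kappa R<1$ while the remaining, explicitly parameter-dependent part is shown to tend to $0$; this step is technical but does not require any new idea.
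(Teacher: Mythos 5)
Your proposal is correct and follows essentially the same route as the paper: a Banach fixed-point argument for~(\ref{eq:int-eqn}) in the exponentially weighted space with the dichotomy bounds on $G(t,s,\xi)$, the sharpened a priori estimate $\left\Vert y_{\ast}(t,\xi,\eta)\right\Vert \le 2c\left\Vert \eta\right\Vert \mathrm{e}^{-\alpha t}$ fed back into the integral for assertion $(\mathrm{iii})$, and a parametrized/smooth contraction argument (fixed point in the space of jointly continuous, respectively $\mathrm{C}^{1}$-in-$\eta$, families) for assertion $(\mathrm{ii})$. The only differences are cosmetic: you package the kernel estimates into a single constant $\kappa$ instead of the paper's explicit $11K/(12\alpha)$-type constants, and you flag the joint-continuity-in-the-weighted-norm step more explicitly than the paper does.
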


\begin{proof}
A proof of assertion ($\mathrm{i}$) is obtained in a standard way
by means of the Banach contraction principle. For the sake of completeness
we present here some essential details. Let $\mathrm{C}\left(\mathbb{R}_{+}\!\mapsto\!\mathbb{R}^{n};\alpha\right)$
be the subspace of $\mathrm{C}\left(\mathbb{R}_{+}\!\mapsto\!\mathbb{R}^{n}\right)$
endowed with norm
\[
\left\Vert \cdot\right\Vert _{\infty}:=\sup_{t\ge0}\mathrm{e}^{\alpha t}\left\Vert \cdot\right\Vert .
\]
Define
\begin{gather*}
\mathcal{Y}_{r,R}:=\left\{ y(\cdot)\in\mathrm{C}\left(\mathbb{R}_{+}\!\mapsto\!\mathbb{R}^{n};\alpha\right):\left\Vert y(t)\right\Vert \le R\mathrm{e}^{-\alpha t}\;\forall t\ge0\right\} .
\end{gather*}

Let us impose conditions on $r,R$ under which $\mathcal{G}[\cdot]:\mathcal{Y}_{r,R}\mapsto\mathcal{Y}_{r,R}$.

On account of~(\ref{eq:est-w}), for each $(\xi,\eta)\in\mathcal{N}_{r}(\mathcal{M})$
and each $y(\cdot)\in\mathcal{Y}_{r,R}$, we have the following estimates:

\begin{alignat*}{1}
 & \left\Vert \intop_{t}^{\infty}X^{t}(\xi)\left[P_{\xi}^{+}+P_{\xi}^{0}\right]\left[X^{s}(\xi)\right]^{-1}w(s,y(s),\xi)\mathrm{d}s\right\Vert \\
\le & \frac{1}{2}\intop_{t}^{\infty}K\mathrm{e}^{\alpha(t-s)}CR^{2}\mathrm{e}^{-2\alpha s}\mathrm{d}s+\frac{1}{2}\intop_{t}^{\infty}KCR^{2}\mathrm{e}^{-2\alpha s}\mathrm{d}s\\
\le & \frac{5}{12\alpha}KCR^{2}\mathrm{e}^{-2\alpha t},
\end{alignat*}
\begin{alignat*}{1}
 & \left\Vert \intop_{0}^{t}X^{t}(\xi)P_{\xi}^{-}\left[X^{s}(\xi)\right]^{-1}w(s,y(s),\xi)\mathrm{d}s\right\Vert \\
\le & \frac{1}{2}\intop_{0}^{t}K\mathrm{e}^{-\alpha(t-s)}CR^{2}\mathrm{e}^{-2\alpha s}\mathrm{d}s\le\frac{KCR^{2}}{2\alpha}\mathrm{e}^{-\alpha t}.
\end{alignat*}
 Hence, for all $(t,\xi,\eta)\in\mathbb{R}_{+}\times\mathcal{N}_{r}(\mathcal{M})$
and $y(\cdot)\in\mathcal{Y}_{r,R}$, we obtain
\begin{gather}
\left\Vert \mathcal{G}[y](t,\xi,\eta)\right\Vert \le\left(cr+\frac{11}{12\alpha}KCR^{2}\right)\mathrm{e}^{-\alpha t}\le R\mathrm{e}^{-\alpha t}\label{eq:est-for-GL}
\end{gather}
provided that
\begin{gather}
cr+\frac{11}{12\alpha}KCR^{2}\le R.\label{eq:restrict-1}
\end{gather}

Now let us find conditions under which $\mathcal{G}[\cdot]$ is a
contraction mapping in metric space $\mathcal{Y}_{r,R}$ endowed with
metric $\rho(\star,\ast):=\left\Vert \star-\ast\right\Vert _{\infty}$.
On account of
\begin{alignat*}{1}
 & \left\Vert w(t,y_{1},\xi)-w(t,y_{2},\xi)\right\Vert \\
\le & \left\Vert \intop_{0}^{1}\left[v^{\prime}\left(\chi^{t}(\xi)+sy_{1}+(1-s)y_{2}\right)-v^{\prime}\left(\chi^{t}(\xi)\right)\right]\mathrm{d}s\right\Vert \left\Vert y_{1}-y_{2}\right\Vert \\
\le & \frac{1}{2}C\left(\left\Vert y_{1}\right\Vert +\left\Vert y_{2}\right\Vert \right)\left\Vert y_{1}-y_{2}\right\Vert \quad\forall(t,\xi)\in\mathbb{R}\times\mathcal{M},\quad\left\Vert y_{1}\right\Vert ,\left\Vert y_{2}\right\Vert \le R,
\end{alignat*}
we obtain
\begin{alignat*}{1}
 & \left\Vert \mathrm{e}^{\alpha t}\intop_{t}^{\infty}X^{t}(\xi)P_{\xi}^{+}\left[X^{s}(\xi)\right]^{-1}\left[w(t,y_{1}(s),\xi)-w(t,y_{2}(s),\xi)\right]\mathrm{d}s\right\Vert \\
\le & \mathrm{e}^{\alpha t}KCR\intop_{t}^{\infty}\mathrm{e}^{\alpha(t-s)}\mathrm{e}^{-2\alpha s}\mathrm{d}s\cdot\sup_{t\ge0}\mathrm{e}^{\alpha t}\left\Vert y_{1}(t)-y_{2}(t)\right\Vert \\
\le & \frac{\mathrm{e}^{-\alpha t}}{3\alpha}KCR\rho(y_{1}(\cdot),y_{2}(\cdot));
\end{alignat*}
\begin{alignat*}{1}
 & \left\Vert \mathrm{e}^{\alpha t}\intop_{t}^{\infty}X^{t}(\xi)P_{\xi}^{0}\left[X^{s}(\xi)\right]^{-1}\left[w(t,y_{1}(s),\xi)-w(t,y_{2}(s),\xi)\right]\mathrm{d}s\right\Vert \\
\le & \mathrm{e}^{\alpha t}KCR\intop_{t}^{\infty}\mathrm{e}^{-2\alpha s}\mathrm{d}s\cdot\sup_{t\ge0}\mathrm{e}^{\alpha t}\left\Vert y_{1}(t)-y_{2}(t)\right\Vert \\
\le & \frac{\mathrm{e}^{-\alpha t}}{2\alpha}KCR\rho(y_{1}(\cdot),y_{2}(\cdot));
\end{alignat*}
\begin{alignat*}{1}
 & \left\Vert \mathrm{e}^{\alpha t}\intop_{0}^{t}X^{t}(\xi)P_{\xi}^{-}\left[X^{s}(\xi)\right]^{-1}\left[w(t,y_{1}(s),\xi)-w(t,y_{2}(s),\xi)\right]\mathrm{d}s\right\Vert \\
\le & \mathrm{e}^{\alpha t}KCR\intop_{0}^{t}\mathrm{e}^{-\alpha(t-s)}\mathrm{e}^{-2\alpha s}\mathrm{d}s\cdot\sup_{t\ge0}\mathrm{e}^{\alpha t}\left\Vert y_{1}(t)-y_{2}(t)\right\Vert \\
\le & \frac{1}{\alpha}KCR\rho(y_{1}(\cdot),y(\cdot)).
\end{alignat*}
for all $(t,\xi,\eta)\in\mathbb{R}_{+}\times\mathcal{N}_{r}(\mathcal{M})$
and $y_{1}(\cdot),y_{2}(\cdot)\in\mathcal{Y}_{r,R}$. The above inequalities
implies
\begin{gather*}
\rho\left(\mathcal{G}[y_{1}](\cdot,\xi,\eta),\mathcal{G}[y_{2}](\cdot,\xi,\eta)\right)\le\frac{11}{6\alpha}KCR\rho\left(y_{1}(\cdot),y_{2}(\cdot)\right).
\end{gather*}
Let
\begin{gather*}
\varkappa:=11KCR/(6\alpha)<1.
\end{gather*}
Then by the Banach contraction principle, for each $(\xi,\eta)\in\mathcal{N}_{r}(\mathcal{M})$,
equation~(\ref{eq:int-eqn}) has a unique solution $y_{\ast}(\cdot,\xi,\eta)\in\mathcal{Y}_{r,R}$.
This completes the proof of assertion $(\mathrm{i})$.

As consequence, we have constructed the mapping $y_{\ast}(\cdot,\star,\ast):\mathbb{R}_{+}\times\mathcal{N}_{r}(\mathcal{\mathcal{M}})\mapsto\mathbb{R}^{n}$.
It is easily seen that this mapping is nothing but the unique fixed
point of operator $\mathcal{G}[\cdot]:\tilde{\mathcal{Y}}_{r,R}\mapsto\tilde{\mathcal{Y}}_{r,R}$
where the space $\tilde{\mathcal{Y}}_{r,R}$ consists of mappings
$y(\cdot,\star,\ast)\in\mathrm{C}\left(\mathbb{R}_{+}\times\mathcal{N}_{r}(\mathcal{M})\!\mapsto\!\mathbb{R}^{n}\right)$
satisfying~(\ref{eq:Y_r,R}). A metric in $\tilde{\mathcal{Y}}_{r,R}$
is defined as
\begin{gather*}
\tilde{\rho}(y_{1}(\cdot,\star,\ast),y_{2}(\cdot,\star,\ast)):=\sup\left\{ \mathrm{e}^{at}\left\Vert y_{1}(t,\xi,\eta)-y_{2}(t,\xi,\eta)\right\Vert :(t,\xi,\eta)\in\mathbb{R}_{+}\times\mathcal{N}_{r}(\mathcal{M})\right\} .
\end{gather*}

Let us prove the differentiability of $y_{\ast}(t,\xi,\ast)$. We
will restrict ourselves to the first order derivatives. Let $\xi\in\mathcal{M}$
be fixed at will and let $\mathcal{Y}_{r,R}^{^{\prime}}$ be the space
of continuous mappings $y(\cdot,\ast):\mathbb{R}_{+}\times\mathbb{L}_{\xi}^{-}\mapsto\mathbb{R}^{n}$
having continuous directional derivatives $y_{\eta}^{\prime}(\cdot,\ast)a$
in all directions $a\in\mathbb{L}_{\xi}^{-}$ and satisfying the inequalities
\begin{gather*}
\max\left\{ \left\Vert y(t,\eta)\right\Vert ,\left\Vert y_{\eta}^{\prime}(t,\eta)a\right\Vert \right\} \le R\mathrm{e}^{-\alpha t}\quad\forall(t,\eta)\in\mathbb{R}_{+}\times\mathbb{L}_{\xi}^{-},\quad\forall a\in\mathbb{L}_{\xi}^{-}:\left\Vert a\right\Vert =r.
\end{gather*}

For any $(t,\eta)\in\mathbb{R}_{+}\times\mathbb{L}_{\xi}^{-}$ and
$y(\cdot,\ast)\in\mathcal{Y}_{r,R}^{\prime}$, we obtain

\begin{alignat*}{1}
 & \left\Vert \intop_{t}^{\infty}X^{t}(\xi)\left[P_{\xi}^{+}+P_{\xi}^{0}\right]\left[X^{s}(\xi)\right]^{-1}w_{y}^{\prime}(s,y(s,\eta),\xi)y_{\eta}^{\prime}(s,\eta)a\mathrm{d}s\right\Vert \\
\le & \intop_{t}^{\infty}K\mathrm{e}^{\alpha(t-s)}CR^{2}\mathrm{e}^{-2\alpha s}\mathrm{d}s+\intop_{t}^{\infty}KCR^{2}\mathrm{e}^{-2\alpha s}\mathrm{d}s\\
\le & \frac{5}{6\alpha}KCR^{2}\mathrm{e}^{-2\alpha t},
\end{alignat*}
\begin{alignat*}{1}
 & \left\Vert \intop_{0}^{t}X^{t}(\xi)P_{\xi}^{-}\left[X^{s}(\xi)\right]^{-1}w_{y}^{\prime}(s,y(s,\eta),\xi)y_{\eta}^{\prime}(s,\eta)a\mathrm{d}s\right\Vert \\
\le & \intop_{0}^{t}K\mathrm{e}^{-\alpha(t-s)}CR^{2}\mathrm{e}^{-2\alpha s}\mathrm{d}s\le\frac{KCR^{2}}{\alpha}\mathrm{e}^{-\alpha t}.
\end{alignat*}
Thus,
\begin{gather*}
\left\Vert \mathcal{G}_{\eta}^{\prime}[y](t,\eta)a\right\Vert \le\left(cr+\frac{11}{6\alpha}KCR^{2}\right)\mathrm{e}^{-\alpha t}\le R\mathrm{e}^{-\alpha t}
\end{gather*}
provided that $a\in\mathbb{L}_{\xi}^{-}$, $\left\Vert a\right\Vert =r$
and
\begin{gather}
cr+\frac{11}{6\alpha}KCR^{2}\le R.\label{eq:restrict-2-1}
\end{gather}
The last inequality together with~(\ref{eq:restrict-1}) implies
that $\mathcal{G}[\cdot]:\mathcal{Y}_{r,R}^{\prime}\mapsto\mathcal{Y}_{r,R}^{\prime}$.

Define a metric $\rho^{\prime}$ in $\mathcal{Y}_{r,R}^{\prime}$
as the maximum of
\begin{gather*}
\sup\left\{ \mathrm{e}^{\alpha t}\left\Vert y_{1}(t,\eta)-y_{2}(t,\eta)\right\Vert :(t,\eta)\in\mathbb{R}_{+}\times\mathbb{L}_{\xi}^{-},\left\Vert \eta\right\Vert \le r\right\}
\end{gather*}
and
\begin{gather*}
\sup\left\{ \mathrm{e}^{\alpha t}\left\Vert \frac{\partial y_{1}(t,\eta)}{\partial\eta}a-\frac{\partial y_{2}(t,\eta)}{\partial\eta}a\right\Vert :(t,\eta,a)\in\mathbb{R}_{+}\times\mathbb{L}_{\xi}^{-}\times\mathbb{L}_{\xi}^{-},\left\Vert \eta\right\Vert \le r,\left\Vert a\right\Vert =r\right\} .
\end{gather*}

On account of

\begin{alignat*}{1}
\left\Vert w_{y}^{\prime}(t,y_{1},\xi)u_{1}-w_{y}^{\prime}(t,y_{2},\xi)u_{2}\right\Vert \le & C\left\Vert y_{1}\right\Vert \left\Vert u_{1}-u_{2}\right\Vert +C\left\Vert u_{2}\right\Vert \left\Vert y_{1}-y_{2}\right\Vert \\
\le & C\left(\left\Vert y_{1}\right\Vert +\left\Vert u_{2}\right\Vert \right)\max\left\{ \left\Vert y_{1}-y_{2}\right\Vert ,\left\Vert u_{1}-u_{2}\right\Vert \right\} \\
\forall & (t,\xi)\in\mathbb{R}\times\mathcal{M},\;\left\Vert y_{1}\right\Vert ,\left\Vert y_{2}\right\Vert \le R,\;u_{1},u_{2}\in\mathbb{R}^{n},
\end{alignat*}
in the same way as above we obtain

\begin{gather*}
\mathrm{e}^{\alpha t}\left\Vert \mathcal{G_{\eta}^{\prime}}[y_{1}(\cdot)](t,\eta)a-\mathcal{G_{\eta}^{\prime}}[y_{2}(\cdot)](t,\eta)a\right\Vert \le\frac{11}{3\alpha}KCR\rho^{\prime}\left(y_{1}(\cdot,\ast),y_{2}(\cdot,\ast)\right),
\end{gather*}
 Now to ensure that $\mathcal{G}[\cdot]$ is contracting in ($\mathcal{Y}_{r,R}^{\prime},\rho^{\prime}$),
it is sufficient to impose the condition
\begin{gather}
\frac{11}{3\alpha}KCR<1.\label{eq:restrict-2}
\end{gather}
Finally let us proceed to assertion $(\mathrm{iii})$. Taking into
account the estimates obtained above we have
\begin{alignat*}{1}
\left\Vert \mathrm{e}^{\alpha t}y_{\ast}(t,\xi,\eta)\right\Vert  & \le c\left\Vert \eta\right\Vert +\mathrm{e}^{\alpha t}\intop_{0}^{\infty}\left\Vert G(t,s,\xi)w(t,y(t,\xi,\eta),\xi)\right\Vert \mathrm{d}s\\
 & \le c\left\Vert \eta\right\Vert +\frac{\varkappa}{2}\sup_{t\in\mathbb{R}_{+}}\left\Vert \mathrm{e}^{\alpha t}y_{\ast}(t,\xi,\eta)\right\Vert ,
\end{alignat*}
and thus,
\begin{gather*}
\left\Vert y_{\ast}(t,\xi,\eta)\right\Vert \le\frac{2c}{2-\varkappa}\mathrm{e}^{-\alpha t}\left\Vert \eta\right\Vert \quad\forall(t,\xi,\eta)\in\mathbb{R}_{+}\times\mathcal{\mathcal{N}}_{r}(\mathcal{M}).
\end{gather*}
In its turn, the last inequality together with~(\ref{eq:est-w}),
(\ref{eq:int-eqn}) and
\begin{gather*}
\left\Vert y_{\ast}(t,\xi,\eta)-X^{t}(\xi)\eta\right\Vert \le\frac{C}{2}\intop_{0}^{\infty}\left\Vert G(t,s,\xi)\right\Vert \left\Vert y_{\ast}(s,\xi,\eta)\right\Vert ^{2}\mathrm{d}s
\end{gather*}
implies that there exists a constant $C_{0}>0$ such that there holds
the inequality~(\ref{eq:y-X^t-eta}).
\end{proof}
Now we are in position to prove
\begin{prop}
\label{prop:def-h}Let positive numbers $r,R$ obey the inequalities~(\ref{eq:restrict-2-1})
and~(\ref{eq:restrict-2}). Then there exists a mapping $h(\cdot)\in\mathrm{C}\left(\mathcal{N}_{r}(\mathcal{M})\!\mapsto\!\mathbb{R}^{n}\right)$
such that for each $(\xi,\eta)\in\mathcal{N}_{r}(\mathcal{M})$ there
hold the inequalities $\left\Vert h(\xi,\eta)\right\Vert \le C_{0}\left\Vert \eta\right\Vert ^{2}$
and
\begin{gather*}
\left\Vert \chi^{t}\left(\xi+\eta+h(\xi,\eta)\right)-\chi^{t}(\xi)\right\Vert \le R\mathrm{e}^{-\alpha t}\quad\forall(t,\xi,\eta)\in\mathbb{R}_{+}\times\mathcal{N}_{r}(\mathcal{M}).
\end{gather*}
 Besides $h(\xi,\ast)\in\mathrm{C}^{2}\left(\mathbb{L}_{\xi}^{-}\cap\mathcal{N}_{r}(\mathcal{M})\!\mapsto\!\mathbb{L}_{\xi}^{+}\oplus\mathbb{L}_{\xi}^{0}\right)$
for each $\xi\in\mathcal{M}$.
\end{prop}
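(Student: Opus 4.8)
The plan is to read off the map $h$ directly from the solution family $y_*(\cdot,\xi,\eta)$ constructed in Proposition~\ref{prop:exist-y}, so that almost nothing new has to be proved. First I would set, for $(\xi,\eta)\in\mathcal{N}_r(\mathcal{M})$,
\[
h(\xi,\eta):=y_*(0,\xi,\eta)-\eta ,
\]
so that $\xi+\eta+h(\xi,\eta)=\xi+y_*(0,\xi,\eta)$. Because Proposition~\ref{prop:exist-y}(i) normalizes the solution by $P_\xi^-y_*(0,\xi,\eta)=\eta$, one has $P_\xi^-h(\xi,\eta)=\eta-\eta=0$, i.e. $h(\xi,\eta)=\bigl(P_\xi^++P_\xi^0\bigr)y_*(0,\xi,\eta)\in\mathbb{L}_\xi^+\oplus\mathbb{L}_\xi^0$; this is exactly where the particular normalization built into Proposition~\ref{prop:exist-y} is used, and it is the only place where membership in $\mathbb{L}_\xi^+\oplus\mathbb{L}_\xi^0$ comes from.

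Next I would record regularity and size. Continuity of $(\xi,\eta)\mapsto h(\xi,\eta)$ on $\mathcal{N}_r(\mathcal{M})$ is inherited from the joint continuity of $y_*$ in Proposition~\ref{prop:exist-y}(ii), and twice continuous differentiability of $h(\xi,\cdot)$ on $\mathbb{L}_\xi^-\cap\mathcal{N}_r(\mathcal{M})$ from that of $y_*(0,\xi,\cdot)$ in the same assertion. For the estimate $\|h(\xi,\eta)\|\le C_0\|\eta\|^2$ it suffices to put $t=0$ in~(\ref{eq:y-X^t-eta}) and use $X^0(\xi)=\mathrm{Id}$ together with $\eta\in\mathbb{L}_\xi^-$:
\[
\|h(\xi,\eta)\|=\bigl\|y_*(0,\xi,\eta)-X^0(\xi)\eta\bigr\|\le C_0\|\eta\|^2 .
\]

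The one step requiring a genuine (though short) argument is the exponential estimate, and here I would invoke uniqueness for~(\ref{eq:aut-sys-v}). Since $w(t,y,\xi)=v(\chi^t(\xi)+y)-v(\chi^t(\xi))-v'(\chi^t(\xi))y$ by definition, equation~(\ref{eq:sys-for-y}) reads simply $\dot y=v(\chi^t(\xi)+y)-v(\chi^t(\xi))$; hence the curve $x(t):=\chi^t(\xi)+y_*(t,\xi,\eta)$ satisfies $\dot x=v(x)$ for $t\ge0$ with $x(0)=\xi+y_*(0,\xi,\eta)=\xi+\eta+h(\xi,\eta)$, and it is defined on all of $\mathbb{R}_+$ because $y_*$ is. As $v$ is $\mathrm{C}^2$, hence locally Lipschitz, uniqueness forces $\chi^t\bigl(\xi+\eta+h(\xi,\eta)\bigr)=x(t)=\chi^t(\xi)+y_*(t,\xi,\eta)$, so that, by~(\ref{eq:Y_r,R}),
\[
\bigl\|\chi^t\bigl(\xi+\eta+h(\xi,\eta)\bigr)-\chi^t(\xi)\bigr\|=\|y_*(t,\xi,\eta)\|\le R\mathrm{e}^{-\alpha t}\qquad\forall t\ge0 .
\]

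I do not anticipate a serious obstacle: everything is extracted from Proposition~\ref{prop:exist-y}, and the only care needed is to ensure, by shrinking $r$ at the outset if necessary, that the base point $\xi+\eta+h(\xi,\eta)$ stays inside the region $\{\|y\|\le1\}$ on which the estimates~(\ref{eq:est-w}) and the constant $C_0$ were set up; since $\|\eta\|<r$ and $\|h(\xi,\eta)\|\le C_0 r^2$, the condition $r(1+C_0r)<1$ plainly suffices.
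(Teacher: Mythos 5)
Your proposal is correct and follows essentially the same route as the paper: define $h(\xi,\eta):=y_{*}(0,\xi,\eta)-\eta$, read off membership in $\mathbb{L}_{\xi}^{+}\oplus\mathbb{L}_{\xi}^{0}$ and the bound $C_{0}\|\eta\|^{2}$ from Proposition~\ref{prop:exist-y} (the paper uses the explicit integral formula for $h$ from~(\ref{eq:int-eqn}) at $t=0$, you use the normalization $P_{\xi}^{-}y_{*}(0,\xi,\eta)=\eta$ --- these are equivalent), and identify $\chi^{t}(\xi)+y_{*}(t,\xi,\eta)$ with $\chi^{t}(\xi+\eta+h(\xi,\eta))$ by uniqueness of solutions. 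Your closing remark about keeping $\xi+\eta+h(\xi,\eta)$ in the region where~(\ref{eq:est-w}) holds is a harmless extra precaution already covered by $\|y_{*}(t,\xi,\eta)\|\le R<1$.
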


\begin{proof}
Define $h(\xi,\eta):=y_{\ast}(0,\xi,\eta)-\eta$. Then
\begin{gather*}
h(\xi,\eta)=-\intop_{0}^{\infty}\left[P_{\xi}^{+}+P_{\xi}^{0}\right]\left[X^{s}(\xi)\right]^{-1}w(s,y_{\ast}(s,\xi,\eta),\xi)\mathrm{d}s\in\mathbb{L}_{\xi}^{+}\oplus\mathbb{L}_{\xi}^{0},
\end{gather*}
and smoothness properties of $h(\xi,\ast)$ follow directly from assertion
(ii) of Proposition~\ref{prop:exist-y}. Since $\chi^{t}(\xi)+y_{\ast}(t,\xi,\eta)$
is the solution of~(\ref{eq:aut-sys-v}) taking value $\xi+\eta+h(\xi,\eta)$
at initial moment of time $t=0$, then
\begin{gather*}
\chi^{t}(\xi)+y_{\ast}(t,\xi,\eta)\equiv\chi^{t}\left(\xi+\eta+h(\xi,\eta)\right).
\end{gather*}
\end{proof}
\begin{prop}
If a positive number $\epsilon\in(0,r/2)$ is sufficiently small,
then for each $(\xi_{0},\zeta_{0})\in\mathcal{U}_{\epsilon}(\mathcal{M})$
there exists $(\xi_{\ast},\zeta_{\ast})\in\mathcal{U}_{2\epsilon}(\mathcal{M})$
such that
\begin{gather*}
\xi_{\ast}+\zeta_{\ast}+h\left(\xi_{\ast},\zeta_{\ast}\right)=\xi_{0}+\zeta_{0}.
\end{gather*}
\end{prop}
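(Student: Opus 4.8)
The statement asserts that the map
\[
\Phi\colon \mathcal{U}_{2\epsilon}(\mathcal{M})\ni(\xi,\zeta)\longmapsto \xi+\zeta+h(\xi,\zeta)\in\mathbb{R}^{n}
\]
hits every point of the smaller tube $\mathcal{U}_{\epsilon}(\mathcal{M})$. Since $h(\xi,\zeta)=y_{\ast}(0,\xi,\zeta)-\zeta$ lies in $\mathbb{L}_{\xi}^{+}\oplus\mathbb{L}_{\xi}^{0}\subset T_{\xi}\mathcal{M}$ and is quadratically small in $\zeta$ by Proposition~\ref{prop:exist-y}(iii), the map $\Phi$ is a small perturbation of the tubular-neighbourhood embedding $(\xi,\zeta)\mapsto\xi+\zeta$. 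The plan is to fix $(\xi_{0},\zeta_{0})\in\mathcal{U}_{\epsilon}(\mathcal{M})$, use the tubular coordinates to rewrite the equation $\Phi(\xi,\zeta)=\xi_{0}+\zeta_{0}$ as a fixed-point equation for a continuous self-map of a closed ball (or more precisely of the closed $2\epsilon$-tube, which is homeomorphic to a product of $\mathcal{M}$ with a closed ball in the fibre), and then invoke the Brouwer fixed point theorem. For Brouwer we need the domain to be homeomorphic to a convex compact set; if $\mathcal{M}$ is not a point this requires a small additional remark, so I would instead localize: work in a single tubular chart and reduce to a ball, which is where the main technical care goes.

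Here is the reduction in more detail. Let $\pi\colon \mathcal{U}_{r}(\mathcal{M})\to\mathcal{M}$ and $\nu\colon\mathcal{U}_{r}(\mathcal{M})\to\mathbb{J}^{-}$ be the base-point and fibre-component maps of the tubular neighbourhood, so that every $x\in\mathcal{U}_{r}(\mathcal{M})$ is uniquely written $x=\pi(x)+\nu(x)$ with $\nu(x)\in\mathbb{J}^{-}_{\pi(x)}$, $\|\nu(x)\|<r$, and both $\pi,\nu$ are continuous. Given the target $q:=\xi_{0}+\zeta_{0}$ with $\|\zeta_{0}\|<\epsilon$, I want $(\xi,\zeta)$ with $\xi+\zeta+h(\xi,\zeta)=q$. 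Since $h(\xi,\zeta)\in T_{\xi}\mathcal{M}$, the vector $\zeta$ is the $\mathbb{J}^{-}_{\xi}$-component and $\xi+h(\xi,\zeta)$ must be the "foot + tangent correction" of $q$ relative to base point $\xi$. This suggests the iteration map
\[
F(\xi,\zeta):=\Bigl(\,\pi\bigl(q-\zeta-h(\xi,\zeta)+\zeta\bigr)\ \text{ reinterpreted}\ ,\ \nu(q)\Bigr),
\]
but it is cleaner to argue as follows: define, for $(\xi,\zeta)$ in the closed $2\epsilon$-tube, the point $x(\xi,\zeta):=q-h(\xi,\zeta)\in\mathbb{R}^{n}$; because $\|h(\xi,\zeta)\|\le C_{0}\|\zeta\|^{2}\le 4C_{0}\epsilon^{2}$, for $\epsilon$ small $x(\xi,\zeta)$ stays in $\mathcal{U}_{r}(\mathcal{M})$, and I set
\[
F(\xi,\zeta):=\bigl(\pi(x(\xi,\zeta)),\,\nu(x(\xi,\zeta))\bigr).
\]
A fixed point $(\xi_{\ast},\zeta_{\ast})$ of $F$ satisfies $\xi_{\ast}=\pi(q-h(\xi_{\ast},\zeta_{\ast}))$ and $\zeta_{\ast}=\nu(q-h(\xi_{\ast},\zeta_{\ast}))$, i.e. $\xi_{\ast}+\zeta_{\ast}=q-h(\xi_{\ast},\zeta_{\ast})$, which is exactly the desired equation. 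Then the required estimates on $y_{\ast}$ in Proposition~\ref{prop:exist-y} together with Proposition~\ref{prop:def-h} give the asymptotic-phase conclusion once this proposition is established.

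For the Brouwer application I must check two things: (a) $F$ maps the closed $2\epsilon$-tube $\overline{\mathcal{U}_{2\epsilon}(\mathcal{M})}$ into itself, and (b) $F$ is continuous. Continuity (b) is immediate: $h$ is continuous by Proposition~\ref{prop:def-h}, and $\pi,\nu$ are continuous on $\mathcal{U}_{r}(\mathcal{M})$. For the invariance (a), note that $q=\xi_{0}+\zeta_{0}$ with $\xi_{0}\in\mathcal{M}$ and $\|\zeta_{0}\|<\epsilon$, so $\pi(q)=\xi_{0}$ and $\|\nu(q)\|=\|\zeta_{0}\|<\epsilon$; perturbing $q$ by $-h(\xi,\zeta)$, a vector of norm $\le 4C_{0}\epsilon^{2}$, moves $\pi$ and $\nu$ by $O(\epsilon^{2})$ uniformly, using compactness of $\mathcal{M}$ and the fact that $\pi,\nu$ are (locally) Lipschitz on the tube; hence $\|\nu(x(\xi,\zeta))\|<\epsilon+O(\epsilon^{2})<2\epsilon$ for $\epsilon$ small, so $F$ indeed lands in the $2\epsilon$-tube. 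The one genuine obstacle is that the closed tube $\overline{\mathcal{U}_{2\epsilon}(\mathcal{M})}$ is a disk bundle over $\mathcal{M}$, not literally a ball, so Brouwer does not apply verbatim. I would resolve this either (i) by the standard fact that Brouwer's theorem holds for any compact space that is a retract of $\mathbb{R}^{N}$ (or an absolute neighbourhood retract with trivial reduced homology), which $\overline{\mathcal{U}_{2\epsilon}(\mathcal{M})}$ is, together with a degree/Lefschetz argument, or more elementarily (ii) by embedding the problem into a single Euclidean chart: since the target moves only by $O(\epsilon^{2})$, the base point $\xi_{\ast}$ is forced to lie within $O(\epsilon^{2})$ of $\xi_{0}$, so one may restrict attention to a fixed coordinate ball around $\xi_{0}$ in $\mathcal{M}$, in which $F$ becomes a continuous self-map of a genuine closed ball in $\mathbb{R}^{m}\times\mathbb{R}^{n-m}$ and Brouwer applies directly. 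I expect option (ii) to be the cleanest route for the write-up, with the bulk of the work being the uniform Lipschitz/continuity estimates that keep all iterates inside the chart and inside the $2\epsilon$-tube.
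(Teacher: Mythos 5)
Your overall strategy is the paper's strategy: rewrite the equation as a fixed-point problem for a continuous map, localize near $(\xi_{0},\zeta_{0})$ so that the domain becomes a genuine closed ball, and apply Brouwer; your option (ii) is essentially what the paper carries out, in a local chart $x=\xi(q)$ with a frame $\nu_{1}(q),\dots,\nu_{n-m}(q)$ of $\mathbb{J}_{\xi(q)}^{-}$. So the route is right, and the Brouwer-on-a-disk-bundle worry is correctly diagnosed and correctly resolved.

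There is, however, a genuine gap in the step that establishes the self-map property. You assert that the tubular projections $\pi,\nu$ are (locally) Lipschitz, so that perturbing the target $q$ by the vector $h$ of norm $\le 4C_{0}\epsilon^{2}$ moves $(\pi,\nu)$ by $O(\epsilon^{2})$. But $\mathbb{J}_{\xi}^{-}$ is a complement of $\mathbb{K}_{\xi}^{-}$ inside the stable subspace $\mathbb{L}_{\xi}^{-}$ of the hyperbolic splitting, and that splitting is only assumed \emph{continuous} in $\xi$ (the paper takes $\nu_{i}(\cdot)\in\mathrm{C}(Q\to\mathbb{R}^{n})$, not Lipschitz); hence the fibre-component map $\nu$ inherits no Lipschitz bound, and mere uniform continuity is not enough: if $\Omega$ is the modulus of continuity of $(\pi,\nu)$, you need $\Omega(4C_{0}\epsilon^{2})<\epsilon$, which does not follow from $\Omega(s)\to0$ (take $\Omega(s)=1/\log(1/s)$). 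The paper circumvents this by never inverting the tubular map abstractly: it writes the equation in the frame at the \emph{center} point, $\xi^{\prime}(0)q+\nu(0)p=H(q,p)$, where the frame-variation terms $\int_{0}^{1}[\xi^{\prime}(0)-\xi^{\prime}(sq)]q\,\mathrm{d}s$ and $[\nu(0)-\nu(q)](z_{0}+p)$ are bounded by $\mu(\epsilon)\cdot O(\epsilon)$ with $\mu(\epsilon)\to0$ — i.e.\ the non-Lipschitz continuity moduli always appear multiplied by a factor of order $\epsilon$ — so the total right-hand side is $O(\epsilon^{2})+o(1)\cdot\epsilon\le\epsilon$ and $A^{-1}H$ maps $\bar{B}_{\epsilon}$ into itself. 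Relatedly, your claim that the base point is forced within $O(\epsilon^{2})$ of $\xi_{0}$ should be weakened to $o(\epsilon)$, which still suffices for the localization. To complete your write-up you should replace the Lipschitz argument by this explicit-chart estimate (or by an equivalent bound showing the frame variation enters only through terms of size $o(1)\cdot\epsilon$).
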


\begin{proof}
There exist a domain $Q\subset\mathbb{R}^{m}$ containing the origin
together with its $2\epsilon$-neighborhood and a mapping $\xi(\cdot)\in\mathrm{C}^{2}\left(Q\!\mapsto\!\mathbb{R}^{n}\right)$
such that $\xi(0)=\xi_{0}$ and locally near $\xi_{0}$ the sub-manifold
$\mathcal{M}$ is given by the parametric equation $x=\xi(q)$, $q\in Q$.
One can choose coordinates $(q_{1},\ldots,q_{m})$ in such a way that
column vectors $\left\{ \xi_{q_{i}}^{\prime}(0)\right\} _{i=1}^{m}$
form an orthonormal base of $T_{\xi_{0}}\mathcal{M}$. Let column
vectors $\nu_{1}(q),\ldots,\nu_{n-m}(q)$, where $\nu_{i}(\cdot)\in\mathrm{C}\left(Q\!\mapsto\!\mathbb{R}^{n}\right)$,
$i=1,\ldots,n-m$, form an orthonormal base of $\mathbb{J}_{\xi(q)}^{-}$.
For $z=(z_{1},\ldots,z_{n-m})\in\mathbb{R}^{n-m}$, denote $\nu(q)z:=\sum_{i=1}^{n-m}z_{i}\nu_{i}(q)$.
Observe that $\left\langle \nu_{i}(q),\nu_{j}(q)\right\rangle =\delta_{ij}$
(the Kronecker symbol). Hence, $\left\Vert \nu(q)z\right\Vert ^{2}=\left\Vert z\right\Vert ^{2}$.
Then locally
\begin{alignat*}{1}
\mathcal{U}_{2\epsilon}(\xi(Q))=\bigcup_{q\in Q,\left\Vert z\right\Vert <2\epsilon}\left\{ \xi(q)+\nu(q)z\right\} ,
\end{alignat*}
and there is a unique $z_{0}$ such that $\zeta_{0}=\nu(0)z_{0}$,
$\left\Vert z_{0}\right\Vert <\epsilon$. We have to show that there
exists a solution $\left(q_{\ast},z_{\ast}\right)$ of the equation
\begin{gather*}
\xi(q)+\nu(q)z+h(\xi(q),\nu(q)z)=\xi(0)+\nu(0)z_{0},
\end{gather*}
such that $q_{\ast}\in Q$ and $\left\Vert z_{\ast}\right\Vert <2\epsilon$
. After the change of variable $z=z_{0}+p$ the above equation can
be rewritten as
\begin{gather}
\xi^{\prime}(0)q+\nu(0)p=H(q,p)\label{eq:eqn-for-q,z}
\end{gather}
where
\begin{alignat*}{1}
H(q,p):= & -h\left(\xi(q),\nu(q)\left(z_{0}+p\right)\right)+\intop_{0}^{1}\left[\xi^{\prime}(0)-\xi^{\prime}(sq)\right]q\mathrm{d}s\\
 & +\left[\nu(0)-\nu(q)\right](z_{0}+p).
\end{alignat*}
The mapping $H(\cdot)$ is correctly defined for all $q\in Q$ and
all $p$ such that $\left\Vert p\right\Vert \le\epsilon$, and thus,
the domain of $H(\star,\ast)$ contains the closed ball
\[
\bar{B}_{\epsilon}:=\left\{ (q,p)\in\mathbb{R}^{n}:\left\Vert q\right\Vert ^{2}+\left\Vert p\right\Vert ^{2}\le\epsilon^{2}\right\} .
\]
 Now we are going to show that the existence of solution to~(\ref{eq:eqn-for-q,z})
follows from the Brouwer fixed point theorem.

After introducing the matrix $A$ with columns $\xi_{q_{1}}^{\prime}(0),\ldots,\xi_{q_{m}}^{\prime}(0),\nu_{1}(0),\ldots,\nu_{n-m}(0)$
and column vector $\left(q,p\right)^{\mathrm{T}}=(q_{1},\ldots,q_{m},p_{1},\ldots,p_{n-m})$,
the equation~(\ref{eq:eqn-for-q,z}) is reduced to
\begin{gather}
\left(q,p\right)^{\mathrm{T}}=A^{\mathrm{-1}}H(q,p).\label{eq:eqn-for-q,z-1}
\end{gather}
Since
\begin{gather*}
\mu(\epsilon):=\max_{\left\Vert q\right\Vert \le\epsilon}\left[\left(\intop_{0}^{1}\left\Vert \xi^{\prime}(sq)-\xi^{\prime}(0)\right\Vert \mathrm{d}s\right)^{2}+\left\Vert \nu(q)-\nu(0)\right\Vert ^{2}\right]^{1/2}\to0,\quad\epsilon\to0,
\end{gather*}
then for sufficiently small $\epsilon$ and $(q,p)\in\bar{B}_{\epsilon}$
we obtain
\begin{gather*}
\left\Vert A^{\mathrm{-1}}H(q,p)\right\Vert \le\left\Vert A^{-1}\right\Vert \left[C_{0}\left\Vert z_{0}+p\right\Vert ^{2}+2\mu(\epsilon)\epsilon\right]\le\left\Vert A^{-1}\right\Vert \left[4C_{0}\epsilon^{2}+2\mu(\epsilon)\epsilon\right]\le\epsilon.
\end{gather*}
Hence $A^{-1}H(\star,\ast)\in\mathrm{C}\left(\bar{B}_{\epsilon}\!\mapsto\!\bar{B}_{\epsilon}\right)$,
and by the Brouwer theorem the mapping $A^{-1}H(\star,\ast)$ has
at least one fixed point $(q_{\ast},p_{\ast})\in\bar{B}_{\epsilon}$.
\end{proof}

\section{Invariant foliation}

For the sake of completeness, not pretending on a novelty, we present
here some comments concerning geometric structure which appears in
a neighborhood of $\mathcal{M}$ due to its hyperbolicity.

For each $\xi\in\mathcal{M}$ the set
\begin{gather*}
\mathcal{L}_{\xi}^{-}:=\bigcup_{\eta\in\mathbb{L}_{\xi}^{-}\cap\mathcal{N}_{2\varepsilon}(\mathcal{M})}\left\{ \xi+\eta+h(\eta,\xi)\right\}
\end{gather*}
 is $\mathrm{C}^{2}$-submanifold of $\mathbb{R}^{n}$ diffeomorphic
to $n_{-}$-dimensional ball, provided that $\epsilon$ is sufficiently
small. Since $T_{\xi}\mathcal{L}_{\xi}$=$\mathbb{L}_{\xi}^{-}$,
then $\mathcal{L}_{\xi}^{-}$ transversally intersects $\mathcal{M}$
at point $\xi$. The intersection is a $\mathrm{C}^{1}$-sub-manifold
$\mathcal{M}_{\xi}^{-}$:=$\mathcal{M}\cap\mathcal{L}_{\xi}^{-}$.
A point $\xi^{\prime}\in\mathcal{M}$ from sufficiently small neighborhood
of $\xi$ belongs to $\mathcal{M}_{\xi}^{-}$ iff $\left\Vert \chi^{t}(\xi)-\chi^{t}(\xi^{\prime})\right\Vert =O\left(\mathrm{e}^{-at}\right)$
as $t\to\infty$. Hence, if $\xi^{\prime}\not\in\mathcal{M}_{\xi}^{-}$,
then $\mathcal{M}_{\xi}^{-}\cap\mathcal{M}_{\xi^{\prime}}^{-}=\varnothing$,
and thus, $\mathcal{L}_{\xi}^{-}\cap\mathcal{L}_{\xi^{\prime}}=\varnothing$.
In fact, once we suppose that there is $x_{0}\in\mathcal{L}_{\xi}^{-}\cap\mathcal{L}_{\xi^{\prime}}$
than
\begin{gather*}
\left\Vert \chi^{t}(x_{0})-\chi^{t}(\xi)\right\Vert =O\left(\mathrm{e}^{-at}\right)\quad\textrm{and}\quad\left\Vert \chi^{t}(x_{0})-\chi^{t}(\xi^{\prime})\right\Vert =O\left(\mathrm{e}^{-at}\right)\quad\textrm{as}\;t\to\infty\\
\Rightarrow\quad\left\Vert \chi^{t}(\xi)-\chi^{t}(\xi^{\prime})\right\Vert =O\left(\mathrm{e}^{-at}\right)\quad\textrm{as}\;t\to\infty\quad\Rightarrow\quad\xi^{\prime}\in\mathcal{M}_{\xi}^{-}.
\end{gather*}
Hence, a neighborhood of invariant manifold $\mathcal{M}$ is fibered
with local manifolds $\mathcal{L}_{\xi}^{-}$ continuously dependent
on $\xi$. Each of such manifolds is formed by initial points of motions
associated with common asymptotic phase. Since for each $s\ge0$,
there holds
\begin{gather*}
\left\Vert \chi^{t+s}\left(\xi+\eta+h(\xi,\eta)\right)-\chi^{t+s}(\xi)\right\Vert =O\left(\mathrm{e}^{-\alpha(t+s)}\right),\quad t\to\infty,
\end{gather*}
or, what is the same,
\begin{gather*}
\left\Vert \chi^{t}\circ\chi^{s}\left(\xi+\eta+h(\xi,\eta)\right)-\chi^{t}\circ\chi^{s}(\xi)\right\Vert =O\left(\mathrm{e}^{-\alpha(t+s)}\right),\quad t\to\infty,
\end{gather*}
then $\chi^{s}\left(\xi+\eta+h(\xi,\eta)\right)\in$ $\mathcal{L}_{\chi^{s}(\xi)}^{-}$,
and this implies the invariance of fibers: $\chi^{t}\left(\mathcal{L}_{\xi}^{-}\right)\subset\mathcal{L}_{\chi^{t}(\xi)}^{-}$
for any $t\ge0.$


\begin{thebibliography}{10}

\bibitem{Arn-Geom_Meth_88}
V.~I. Arnol'd.
\newblock {\em Geometrical methods in the theory of ordinary differential
  equations}, volume 250 of {\em Grundlehren der Mathematischen Wissenschaften
  [Fundamental Principles of Mathematical Sciences]}.
\newblock Springer-Verlag, New York, second edition, 1988.

\bibitem{Aulbach_1982}
B.~Aulbach.
\newblock Invariant manifolds with asymptotic phase.
\newblock {\em Nonlinear Anal.}, 6(8):817 -- 827, 1982.

\bibitem{Battelli-2011}
F.~Battelli and K.~J. Palmer.
\newblock Smoothness of asymptotic phase revisited.
\newblock {\em Adv. Nonlinear Stud.}, 11(4):837 -- 851, 2011.

\bibitem{Bog_Ilin_08}
A.~A. Bogolyubov and Y.~A. Il'in.
\newblock Existence of a foliation in a neighborhood of the invariant torus of
  an essentially nonlinear system.
\newblock {\em Differ. Uravn. Protsessy Upr.}, (2):19 -- 38, 2008.

\bibitem{Chicone_Liu_04}
C.~Chicone and W.~Liu.
\newblock Asymptotic phase revisited.
\newblock {\em Journal of Differential Equations}, 204(1):227 -- 246, 2004.

\bibitem{Codd_Lev_55}
E.~A. Coddington and N.~Levinson.
\newblock {\em Theory of ordinary differential equations}.
\newblock McGraw-Hill Book Company, Inc., New York-Toronto-London, 1955.

\bibitem{Coppel78}
W.~A. Coppel.
\newblock {\em Dichotomies in stability theory}.
\newblock Lecture Notes in Mathematics, Vol. 629. Springer-Verlag, Berlin-New
  York, 1978.

\bibitem{Dumortier_06}
F.~Dumortier.
\newblock Asymptotic phase and invariant foliations near periodic orbits.
\newblock {\em Proc. Amer. Math. Soc.}, 134(10):2989 -- 2996, 2006.

\bibitem{Fenichel74}
N.~Fenichel.
\newblock Asymptotic stability with rate conditions.
\newblock {\em Indiana Univ. Math. J.}, 23:1109 -- 1137, 1974.

\bibitem{Hartman_64}
P.~Hartman.
\newblock {\em Ordinary differential equations}.
\newblock John Wiley \& Sons, Inc., New York-London-Sydney, 1964.

\bibitem{Sam_DE_76}
A.~M. {Samojlenko}.
\newblock {Exponential stability of an invariant torus of a dynamic system.}
\newblock {\em {Differ. Equations}}, 11:618 -- 629, 1976.

\end{thebibliography}

\end{document}